\documentclass{article}
\usepackage[utf8]{inputenc}
\usepackage{amsmath,amsthm,dsfont,pifont,amsfonts,MnSymbol}
\usepackage{verbatim}
\usepackage{faktor}

\title{Perfect maps between submetrizable spaces 
\footnote{2020 Mathematics Subject Classification: 54E35, 54E40, 54C10.}
\footnote{Keywords\textup{:} perfect map, submetrizable space.}}
\author{Vlad Smolin \\ \small{Krasovskii Institute of Mathematics and Mechanics, Ural Federal University,} \\ \small{ Yekaterinburg, Russia} \\ \small{e-mail: SVRusl@yandex.ru}}

\theoremstyle{plain}
\newtheorem{teo}{Theorem}
\newtheorem{lemm}[teo]{Lemma}

\newtheorem{prop}[teo]{Proposition}
\newtheorem{ques}[teo]{Question}

\theoremstyle{definition}
\newtheorem{deff}[teo]{Definition}

\newtheorem{nota}[teo]{Notation}

\renewcommand{\phi}{\varphi}

\begin{document}

\maketitle
\begin{abstract}
We investigate a question posed by Chen \cite{Chen}: if $X$ and $Y$ are paracompact submetrizable spaces and $f:X\to Y$ is a perfect map, can $X$ and $Y$ be submetrized by metrics $\rho$ and $d$ respectively such that $f$ remains perfect with respect to the induced topologies?
\end{abstract}

\section{Introduction}

A space X is submetrizable if X has a weaker metrizable topology, or, equivalently, if X can be mapped onto a metrizable topological space by a continuous one-to-one map.

In \cite{Chen} Chen proved that if $\langle X, \tau_X \rangle$ is a paracompact $\sigma$-space and $f:\langle X, \tau_X \rangle \to \langle Y, \tau_Y \rangle$ is a perfect map, then both $\langle X, \tau_X \rangle$ and $\langle Y, \tau_Y \rangle$ can be submetrized by metric $\rho$ and $d$ respectively such that $f:\langle X, \tau_\rho \rangle \to \langle Y, \tau_d \rangle$ is a perfect map. He also posed the following question:
\begin{ques}
    Let both $\langle X, \tau_X \rangle$ and $\langle Y, \tau_Y \rangle$ be paracompact submetrizable spaces and $f:\langle X, \tau_X \rangle \to \langle Y, \tau_Y \rangle$ be a perfect map. Then can $\langle X, \tau_X \rangle$ and $\langle Y, \tau_Y\rangle$ be submetrized by metric $\rho$ and $d$  respectively such that $f:\langle X, \tau_\rho \rangle \to \langle Y, \tau_d \rangle$?
\end{ques}

We give a negative answer to this question and show that the answer is positive if we additionally assume that $\langle Y, \tau_Y \rangle$ is perfectly normal.

\section{Notation and terminology}

\begin{nota} ...
    \begin{itemize}
        \item[\ding{46}\,] $\mathbb{P} \coloneq $ the set of irrational numbers;
        \item[\ding{46}\,] $\mathbb{Q} \coloneq $ the set of rational numbers;
        \item[\ding{46}\,] $\mathbb{M} \coloneq $ the Michael line, i.e. the set $\mathbb{R}$ with the topology generated by the base $\{U\cup K\ \colon\ U \in \tau_{\mathbb{R}} \text{ and } K \subseteq \mathbb{P}\}$ (for details see page 77 in \cite{enc}).
    \end{itemize}
\end{nota}

\begin{nota} Let $\langle X, \tau \rangle$ be a topological space. Then
    \begin{itemize}
        \item[\ding{46}\,] a subset of $X$ is called {\it cozero} if it is equal to $f^{-1}\big [(0,1]\big ]$ for some continuous function $f\colon \langle X, \tau \rangle \to [0,1]$; 
        \item[\ding{46}\,] if $A \subseteq X$, then $\tau(A) \coloneq \{U \in \tau:A\subseteq U\}$;
        \item[\ding{46}\,] if $A \subseteq X$, then $\mathbf{Cl}_{\tau}(A) \coloneq $ the {\it closure} of $A$ in $X$;
\item[\ding{46}\,] if $A \subseteq X$, then $\tau \upharpoonright A \coloneq \{U \cap A\ \colon\ U \in \tau \} =$ the subspace topology on $A$. Sometimes we will write $\langle A, \tau \rangle$ instead of $\langle A, \tau \upharpoonright A \rangle$;
        \item[\ding{46}\,] $\tau\downarrow_{metr}\ \coloneq \{\sigma \subseteq \tau\ \colon\ \sigma \text{ is a topology and } \langle X, \sigma\rangle \text{ is metrizable}\}$;
        \item[\ding{46}\,] if $\mathcal{F}$ is a set and $n\in\omega$, then $[\mathcal{F}]^{n}\coloneq\{F\subseteq\mathcal{F}:|F|=n\}$;
        \item[\ding{46}\,] $[\mathcal{F}]^{<\omega}\coloneq\bigcup_{n\in\omega}[\mathcal{F}]^n$.
    \end{itemize}
\end{nota}

\begin{deff}
    Let both $\langle X, \tau_X \rangle$ and $\langle Y, \tau_Y \rangle$ be submetrizable spaces, and let $f:\langle X, \tau_X \rangle \to \langle Y, \tau_Y \rangle$ be a perfect map. We say that the triple $\langle f, \langle X, \tau_X \rangle, \langle Y, \tau_Y \rangle \rangle$ is (perfect)-submetrizable, if there exist $\sigma_X \in \tau_X\downarrow_{metr}$ and $\sigma_Y \in \tau_Y\downarrow_{metr}$ such that $f:\langle X, \sigma_X \rangle \to \langle Y, \sigma_Y \rangle$ is perfect.
\end{deff}

\section{Positive result}

\begin{prop}[Lemma 3.1 in \cite{oka}] \label{Oka_lemma}
    Let $X$ be a submetrizable space, and let $\mathcal{U}$ be a $\sigma$-discrete collection of cozero sets of $X$. Then there exist a metric space $M$ and a one-to-one map $f\colon X \to M$ such that $f[U]$ is an open set of $M$ for every $U \in \mathcal{U}$.  
\end{prop}

\begin{lemm} \label{analog_of_Oka}
    Let $\langle X, \tau \rangle$ be a collectionwise normal submetrizable space and let $\mathcal{F}$ be a locally finite collection of closed subsets of $\langle X, \tau \rangle$ such that $\forall \mathcal{F}^\prime\subseteq\mathcal{F}\ [\bigcup \mathcal{F}^\prime \text{ is } G_\delta \text{ in } \langle X, \tau \rangle]$. Then there exists $\sigma \in \tau\downarrow_{metr}$ such that $\mathcal{F}$ is a locally finite collection of closed subsets of $\langle X, \sigma \rangle$.
\end{lemm}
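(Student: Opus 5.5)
The plan is to reduce the lemma to Proposition~\ref{Oka_lemma}: we produce a $\sigma$-discrete family $\mathcal{U}$ of cozero subsets of $\langle X,\tau\rangle$ which, once all its members are open in a weaker metrizable topology, forces every member of $\mathcal{F}$ to be closed and $\mathcal{F}$ to be locally finite. Proposition~\ref{Oka_lemma} then yields a one-to-one map $g\colon X\to M$ onto a metric space with $g[U]$ open for all $U\in\mathcal{U}$. Pulling the metric back along $g$ gives a metrizable topology $\sigma_0$ in which every $U\in\mathcal{U}$ is open; but $\sigma_0$ need not be contained in $\tau$. We therefore take $\sigma$ to be the topology of the metric $\rho_0+\rho_1$, where $\rho_0$ is the pulled-back metric and $\rho_1$ is any metric witnessing submetrizability. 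Since $\rho_0+\rho_1$ generates the supremum of the two metric topologies, $\sigma$ is metrizable and contains every $U\in\mathcal U$. To get $\sigma\subseteq\tau$, we use Oka's map continuous, or equivalently replace $\rho_0$ by a continuous pseudometric construction in the spirit of Oka's proof. The point is that each cozero set $U=h^{-1}(0,1]$ contributes the continuous pseudometric $|h(x)-h(y)|$, so the family can be arranged to be $\tau$-continuous. I will take Proposition~\ref{Oka_lemma} in the form in which $g$ is continuous, which is how it is used for submetrizability.

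To build $\mathcal{U}$, first use local finiteness and paracompactness-like behaviour. Collectionwise normality together with local finiteness lets us choose an open cover $\mathcal{V}$ such that each $V\in\mathcal V$ meets only finitely many members of $\mathcal F$. Since $X$ is only collectionwise normal, not paracompact, I instead work directly with the sets
\[
F_{\mathcal{E}}=\bigcup(\mathcal{F}\setminus\mathcal{E}),\qquad \mathcal E\in[\mathcal F]^{<\omega}.
\]
Each $F_{\mathcal E}$ is closed, by local finiteness, and is $G_\delta$ by hypothesis. In a normal space a closed $G_\delta$ set is a zero set, so $X\setminus F_{\mathcal E}$ is cozero. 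Local finiteness of $\mathcal F$ in $\sigma$ amounts to the sets $X\setminus F_{\mathcal E}$, over all finite $\mathcal E$, being $\sigma$-open and covering $X$. Closedness of each $F\in\mathcal F$ in $\sigma$ is similar: $F$ is a zero set, so $X\setminus F$ is cozero.

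The main obstacle is that the family $\{X\setminus F_{\mathcal E}\}$, together with $\{X\setminus F:F\in\mathcal F\}$, is not $\sigma$-discrete, so Proposition~\ref{Oka_lemma} cannot be applied directly. My first attempt will use collectionwise normality to discretize. For each $n$, let $X_n$ be the closed set of points lying in at most $n$ members of $\mathcal F$, and within the layer where exactly $n$ members meet, the sets $\bigcap\mathcal E\setminus F_{\mathcal E\cup\ldots}$ indexed by $\mathcal E\in[\mathcal F]^n$. Inductively on $n$, apply collectionwise normality to the discrete family of closed pieces $\bigcap\mathcal E\cap(\text{complement of a neighbourhood of the }(n{+}1)\text{-fold intersections})$ to obtain a discrete family of cozero neighbourhoods $W_{\mathcal E}$ meeting only the members of $\mathcal E$. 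This works by shrinking with Urysohn functions, which uses the $G_\delta$ hypothesis to stay cozero. The family $\mathcal U=\{W_{\mathcal E}\}_{n,\mathcal E}\cup\{X\setminus F_{\emptyset}\}$ is then $\sigma$-discrete, and the $W$'s together with $X\setminus F_\emptyset$ cover $X$, each meeting finitely many members of $\mathcal F$. For closedness of the individual $F$, I would add the complements $X\setminus F$ in a discretized form, namely $W_{\mathcal E}\setminus F$ for $F\notin\mathcal E$. These sets are cozero, since they are intersections of two cozero sets, and they are discrete in $\mathcal E$ for fixed $n$. Then every point off $F$ has a $\sigma$-neighbourhood missing $F$. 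The careful verification of discreteness and covering in this induction is where most of the work lies.
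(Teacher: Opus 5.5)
\textbf{There is a genuine gap.} It sits in the step you postpone: the claim that the sets $W_{\mathcal E}$, together with $X\setminus F_{\varnothing}$, cover $X$.

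At level $n$ you use one closed piece per $\mathcal E\in[\mathcal F]^n$, namely $\bigcap\mathcal E\setminus N_{n+1}$. Here $N_{n+1}$ is a single open set containing the closed set $Z_{n+1}$ of points lying in at least $n+1$ members. Consider a point $x$ whose exact type $\mathcal E_x=\{C\in\mathcal F: x\in C\}$ has $n$ elements but which lies in $N_{n+1}$.
\begin{itemize}
\item[\ding{46}\,] Such an $x$ is missed at level $n$.
\item[\ding{46}\,] Nothing forces some $W_{\mathcal E'}$ with $|\mathcal E'|>n$ to contain it. Those sets are only required to contain pieces $\bigcap\mathcal E'\setminus N_{m+1}\subseteq Z_{n+1}$.
\item[\ding{46}\,] Yet $N_{n+1}\setminus Z_{n+1}$ is in general nonempty. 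For $\{[k,\infty):k\in\omega\}$ in $\mathbb R$ and $n\ge 1$, every open $N_{n+1}\supseteq Z_{n+1}=[n,\infty)$ contains points of exact type $\{[0,\infty),\dots,[n-1,\infty)\}$.
\end{itemize}
You could enforce the covering by a top-down induction: build the higher levels first and let $N_{n+1}$ be the union of the $W_{\mathcal E'}$ with $|\mathcal E'|>n$. But that needs a top level, i.e. a bound on the multiplicity of $\mathcal F$. Local finiteness does not give one, as the same example shows.

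Relatedly, you misplace the role of the hypothesis. Cozero shrinkings exist in any normal space by Urysohn's lemma, so no $G_\delta$ assumption is needed there. The assumption that every $\bigcup\mathcal F'$ is $G_\delta$ is in fact necessary for the conclusion: if $\mathcal F$ is a locally finite closed family in a metrizable $\sigma\subseteq\tau$, each $\bigcup\mathcal F'$ is $\sigma$-closed, hence $G_\delta$ in $\sigma$ and so in $\tau$. So it has to do real work, and the covering step is exactly where it is needed.

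\textbf{The missing idea} (this is what the paper does) is to use countably many closed pieces per $\mathcal E$ instead of one.
\begin{itemize}
\item[\ding{46}\,] Since $F_{\mathcal E}$ is closed and $G_\delta$, the exact-type set $\mathbf A(\mathcal E)=\bigcap\mathcal E\setminus F_{\mathcal E}$ is $F_\sigma$. Write $\mathbf A(\mathcal E)=\bigcup_{i\in\omega}\mathbf A^i(\mathcal E)$ with each $\mathbf A^i(\mathcal E)$ closed.
\item[\ding{46}\,] For fixed $n$ and $i$, the family $\{\mathbf A^i(\mathcal E):\mathcal E\in[\mathcal F]^n\}$ is disjoint (distinct exact types), closed and locally finite, hence discrete.
\item[\ding{46}\,] Collectionwise normality and Urysohn's lemma then give a discrete family of cozero sets $\mathbf U^i(\mathcal E)$ with $\mathbf A^i(\mathcal E)\subseteq\mathbf U^i(\mathcal E)\subseteq X\setminus F_{\mathcal E}$.
\end{itemize}
No induction is needed. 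These countably many discrete families cover $X$: $x\in\mathbf A(\mathcal E_x)$, hence $x\in\mathbf U^i(\mathcal E_x)$ for some $i$. That set misses every $C\in\mathcal F$ with $x\notin C$, which gives local finiteness and closedness in $\sigma$ at once. Your extra sets $W_{\mathcal E}\setminus F$ then become superfluous; for $F\notin\mathcal E$ they equal $W_{\mathcal E}$ anyway.

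Two minor slips:
\begin{itemize}
\item[\ding{46}\,] The set of points lying in at most $n$ members is open, not closed.
\item[\ding{46}\,] Passing to $\rho_0+\rho_1$ does nothing for $\sigma\subseteq\tau$. That inclusion comes solely from the continuity of the map in Proposition~\ref{Oka_lemma}.
\end{itemize}
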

\begin{proof}
    For every $F \in [\mathcal{F}]^{<\omega}$ let 
    $$
    \mathbf{A}(F) \coloneq \bigcap F \setminus \{C\in\mathcal{F}:C \not \in F\}.
    $$
    Note that $\mathbf{A}(F)$ is $F_\sigma$, so let 

    \begin{equation} \label{AF_eq_AFi}
        \mathbf{A}(F) = \bigcup_{i \in \omega} \mathbf{A}^i(F),
    \end{equation}
    where $\mathbf{A}^i(F)$ is closed. By the standard argument it follows that for all $n \in \omega$ 
    $$\{\mathbf{A}(F):F\in[\mathcal{F}]^n\} \text{ is locally finite and disjoint},$$
    so for all $n \in \omega$ and $i \in \omega$
    $$
        \{\mathbf{A}^i(F):F\in[\mathcal{F}]^n\} \text{ is a discrete family of closed sets}.
    $$
    Since $\langle X, \tau \rangle$ is collectionwise normal, it follows that for all $n\in\omega$ and $i\in\omega$ there exists a discrete family of open sets $\{\mathbf{W}^i(F):F\in[\mathcal{F}]^n\}$ such that $\mathbf{A}^i(F)\subseteq \mathbf{W}^i(F)$ for all $F\in[\mathcal{F}]^n$. Now for every $F \in [\mathcal{F}]^{<\omega}$ and $i\in\omega$ let
    $$
        \mathbf{AW}^i(F) \coloneq \mathbf{W}^i(F) \setminus \{C\in\mathcal{F}:C \not \in F\}.
    $$
    From the definition of $\mathbf{A}(F)$ it follows that $\mathbf{A}^i(F)\subseteq\mathbf{AW}^i(F)$; since $\langle X, \tau \rangle$ is normal, there exists a cozero set $\mathbf{U}^i(F)$ such that 
    \begin{equation} \label{AiF_sub_UiF}
        \mathbf{A}^i(F) \subseteq \mathbf{U}^i(F) \subseteq \mathbf{AW}^i(F).
    \end{equation} 
    Finally, for every $n\in\omega$ and $i\in\omega$ let
    $$
        \mathcal{U}^n_i \coloneq \{\mathbf{U}^i(F):F\in[\mathcal{F}]^n\}.
    $$
    Note that $\mathcal{U}^n_i$ is a discrete family of cozero sets, then from Proposition \ref{Oka_lemma} it follows that there exists $\sigma \in \tau\downarrow_{metr}$ such that $\mathcal{U}^n_i \subseteq \sigma$ for all $n\in\omega$ and $i\in\omega$.

    Note that for all $i\in\omega$, for all $F\in[\mathcal{F}]^{<\omega}$, and for all $C\in \mathcal{F}\setminus F$, we have $\mathbf{U}^{i}(F)\cap C=\varnothing$. Now, using (\ref{AF_eq_AFi}) and the first inclusion in (\ref{AiF_sub_UiF}), it is easy to deduce that $\mathcal{F}$ is a locally finite collection of closed subsets of $\langle X, \sigma \rangle$.
\end{proof}

\begin{teo}
    Let $\langle Y, \tau_Y \rangle$ be a submetrizable, collectionwise normal and perfect space; if $\langle X, \tau_X \rangle$ is a submetrizable space and $f\colon \langle X, \tau_X \rangle \to \langle Y, \tau_Y \rangle$ is a perfect map, then the triple $\langle f, \langle X, \tau_X \rangle, \langle Y, \tau_Y \rangle \rangle$ is (perfect)-submetrizable.
\end{teo}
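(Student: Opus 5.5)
The plan is to fix one weaker metric on $X$, push countably many locally finite closed covers of $X$ forward to $Y$, and metrize $Y$ by Lemma \ref{analog_of_Oka} so that the images stay locally finite. Fix $\rho_0$, a metric on $X$ with $\tau_{\rho_0}\subseteq\tau_X$. Since $\langle X,\tau_{\rho_0}\rangle$ is metrizable, hence paracompact, for each $n\in\omega$ choose a cover $\mathcal{F}_n$ of $X$ by $\tau_{\rho_0}$-closed sets of $\rho_0$-diameter $<1/n$ which is locally finite in $\tau_{\rho_0}$. Then $\mathcal{F}_n$ is also a locally finite family of closed sets in $\tau_X$.

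Since $f$ is perfect, $\mathcal{G}_n\coloneq\{f[F]:F\in\mathcal{F}_n\}$ is a locally finite closed cover of $\langle Y,\tau_Y\rangle$. Every subfamily of $\mathcal{G}_n$ has closed union, and $Y$ is perfect, so every such union is $G_\delta$. Thus Lemma \ref{analog_of_Oka} applies to each $\mathcal{G}_n$ and gives $\sigma_n\in\tau_Y\downarrow_{metr}$ in which $\mathcal{G}_n$ is a locally finite family of closed sets. Let $\sigma_Y$ be the topology generated by $\bigcup_n\sigma_n$. It is the topology induced by the diagonal injection of $Y$ into $\prod_n\langle Y,\sigma_n\rangle$, so it is metrizable, and $\sigma_Y\subseteq\tau_Y$. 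Let $\sigma_X$ be generated by $\tau_{\rho_0}\cup\{f^{-1}[W]:W\in\sigma_Y\}$. This is the topology induced by the injection $x\mapsto(x,f(x))$ into $\langle X,\rho_0\rangle\times\langle Y,\sigma_Y\rangle$, so $\sigma_X$ is metrizable, $\sigma_X\subseteq\tau_X$, and $f\colon\langle X,\sigma_X\rangle\to\langle Y,\sigma_Y\rangle$ is continuous.

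Fibres are $\tau_X$-compact and $\sigma_X\subseteq\tau_X$, so they are $\sigma_X$-compact. It remains to show $f$ is closed. It suffices to prove: for $y\in Y$ and a $\sigma_X$-open $U\supseteq K\coloneq f^{-1}(y)$, there is a $\sigma_Y$-open $V\ni y$ with $f^{-1}[V]\subseteq U$.
\begin{itemize}
\item[(a)] Cover $K$ by finitely many basic sets $B_{\rho_0}(x_j,r_j)\cap f^{-1}[W_j]\subseteq U$ with $y\in W_j$. Put $W_0\coloneq\bigcap_j W_j$.
\item[(b)] By compactness of $K$ in $\rho_0$ (a Lebesgue-number argument), find $\varepsilon>0$ with $N_\varepsilon(K)\cap f^{-1}[W_0]\subseteq U$, where $N_\varepsilon$ is the $\rho_0$-neighbourhood. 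Choose $n$ with $1/n<\varepsilon$.
\item[(c)] Since $\mathcal{G}_n$ is locally finite and closed in $\sigma_Y$, there is a $\sigma_Y$-open $V\ni y$ with $V\subseteq W_0$ that meets only finitely many members of $\mathcal{G}_n$. Removing the finitely many of them not containing $y$, we may assume $V$ misses every $f[F]$ with $y\notin f[F]$.
\item[(d)] Then every $x\in f^{-1}[V]$ lies in some $F\in\mathcal{F}_n$ with $F\cap K\neq\varnothing$. As $\operatorname{diam}F<1/n<\varepsilon$, we get $f^{-1}[V]\subseteq N_\varepsilon(K)\cap f^{-1}[W_0]\subseteq U$.
\end{itemize}

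The main obstacle is arranging that $\mathcal{G}_n$ stays locally finite in a metrizable topology on $Y$ coarser than $\tau_Y$. This is exactly where collectionwise normality and perfectness of $Y$ are used, via Lemma \ref{analog_of_Oka}. The other delicate point is step (b), which needs the compactness of $K$ in $\rho_0$ to reduce the finitely many basic neighbourhoods to a single uniform $\varepsilon$.
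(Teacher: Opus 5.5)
Your proof is correct and follows essentially the same route as the paper. Like the paper, you push countably many locally finite closed covers of small $\rho_0$-diameter forward along $f$, use Lemma \ref{analog_of_Oka} (through perfectness and collectionwise normality of $Y$) to keep each image family locally finite in a coarser metrizable topology, take the topology generated by the metric topology together with $f^{-1}[\sigma_Y]$, and prove closedness by the same compact-fibre, small-diameter argument. The only difference is that you get metrizability of $\sigma_Y$ and $\sigma_X$ from the diagonal embedding into $\prod_n\langle Y,\sigma_n\rangle$ and the graph embedding into $\langle X,\rho_0\rangle\times\langle Y,\sigma_Y\rangle$, which is a cleaner substitute for the paper's appeal to Proposition \ref{Oka_lemma} and the Bing--Nagata--Smirnov theorem and makes the regularity check unnecessary.
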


\begin{proof}
    Fix $\sigma_X\in \tau_X\downarrow_{metr}$ and a metric $\rho$ which generates $\sigma_X$. Consider base $\mathcal{B} = \bigcup_{i\in\omega}\mathcal{B}_i$ for $\langle X, \sigma_X\rangle$ such that
    \begin{itemize}
        \item[(a)] $\mathcal{B}_m$ is locally finite in $\langle X, \sigma_X\rangle$ for all $m\in\omega$;
        \item[(b)] $\mathbf{diam}_\rho(U) < \frac{1}{m}$ for all $U \in \mathcal{B}_m$;
        \item[(c)] $\mathcal{B}_m$ is a cover of $X$ for all $m\in\omega$.
    \end{itemize}

    For every $m \in \omega$ let $\mathbf{Cl}_{\sigma_X}(\mathcal{B}_m) \coloneq \{\mathbf{Cl}_{\sigma_X}(U): U\in\mathcal{B}_m\}$. Then $\mathbf{Cl}_{\sigma_X}(\mathcal{B}_m)$ is a locally finite family of closed sets in $\langle X, \sigma_X\rangle$, and so in $\langle X, \tau_X\rangle$. Since $f$ is perfect, $f[\mathbf{Cl}_{\sigma_X}(\mathcal{B}_m)] = \{f[C]:C\in \mathbf{Cl}_{\sigma_X}(\mathcal{B}_m)\}$ is a locally finite family of closed sets in $\langle Y, \tau_Y\rangle$, then from Lemma \ref{analog_of_Oka} it follows that there exists $\sigma_Y(m)\in \tau_Y\downarrow_{metr}$ such that
    $$
        f[\mathbf{Cl}_{\sigma_X}(\mathcal{B}_m)] \text{ is a locally finite collection of closed subsets of }\langle Y, \sigma_Y(m)\rangle.
    $$

    Since $\sigma_Y(m)$ is perfectly normal and has a $\sigma$-discrete base for all $m \in \omega$, from Proposition \ref{Oka_lemma} it follows that there exists $\sigma_Y\in \tau_Y\downarrow_{metr}$ such that $\forall m\in\omega[\sigma_Y(m)\subseteq\sigma_Y]$. So we have
    \begin{equation} \label{f_Bn_loc_fin}
        \forall m\big[f[\mathbf{Cl}_{\sigma_X}(\mathcal{B}_m)] \text{ is a closed locally finite collection in }\langle Y, \sigma_Y\rangle\big].
    \end{equation}
    
    Let $\gamma \coloneq$ the topology on $X$ generated by $\sigma_X\cup f^{-1}[\sigma_Y]$ as a subbase. Let us prove that $\langle X, \gamma\rangle$ is metrizable. Consider base $\mathcal{V} = \bigcup_{i \in \omega} \mathcal{V}_i$ for $\langle Y, \sigma_Y\rangle$ where $\mathcal{V}_i$ is locally finite. Firstly, note that 
    \begin{equation} \label{base_for_gamma}
        \{U\cap f^{-1}[V]:U\in\sigma_X \text{ and } V \in \sigma_Y\} \text{ is a base for } \langle X, \gamma\rangle.
    \end{equation}
    Then $\{U\cap f^{-1}[V]:U\in\mathcal{B}\text{ and } V \in \mathcal{V}\}$ is a base for $\langle X, \gamma\rangle$ too. Since 
    $$
        \{U\cap f^{-1}[V]:U\in\mathcal{B}\text{ and } V \in \mathcal{V}\} = \bigcup_{i,j\in\omega}\{U\cap f^{-1}[V]:U\in\mathcal{B}_i\text{ and } V \in \mathcal{V}_j\}
    $$
    is $\sigma$-locally finite, from the Bing-Nagata-Smirnov metrization theorem (It is easy to check that this space is regular) it follows that $\langle X, \gamma\rangle$ is metrizable. 
    
    The map $f:\langle X, \gamma \rangle \to \langle Y , \sigma_Y \rangle$ is obviously continuous and compact, let us prove that it is closed.
    
    Fix $y \in Y$ and $U \in \gamma(f^{-1}(y))$, we need to prove that there exists $H \in \sigma_Y(y)$ such that $f^{-1}[H] \subseteq U$. From (\ref{base_for_gamma}) it follows that for every $x \in f^{-1}(y)$ there exists $U_x \in \gamma(x)$, such that $U_x = V_x \cap f^{-1}[W_x] \subseteq U$, where $V_x \in \sigma_X(x)$ and $W_x \in \sigma_Y(y)$. Since $f^{-1}(y)$ is compact in $\gamma$, then there exist $x_0, \dots, x_{n-1} \in f^{-1}(y)$ such that $f^{-1}(y) \subseteq U_{x_0} \cup \dots \cup U_{x_{n-1}}$ and $f^{-1}(y)\cap U_{x_i} \neq \varnothing$ for all $i \in n$. Note that $f^{-1}(y) \subseteq f^{-1}[W_{x_i}]$ for all $i \in n$. Let 
    $$
        W \coloneq \bigcap_{i\in n} f^{-1}[W_{x_i}],
    $$
    then 
    $$
        f^{-1}(y) \subseteq W\cap(V_{x_0}\cup\dots\cup V_{x_{n-1}}) \subseteq U_{x_0} \cup \dots \cup U_{x_{n-1}} \subseteq U.
    $$
    Since $f^{-1}(y)$ is compact in $\sigma_X$, it follows that $\rho(f^{-1}(y), X\setminus \bigcup_{i\in n} V_{x_i}) > 0$, so fix $m \in \omega$ such that 
    \begin{equation} \label{1_m}
        \frac{1}{m} < \rho(f^{-1}(y), X\setminus \bigcup_{i\in n} V_{x_i}).
    \end{equation}
    From (b) it follows that
    \begin{equation} \label{B_m}
        \forall V \in \mathcal{B}_m\ [\text{if } \mathbf{Cl}_{\sigma_X}(V) \cap f^{-1}(y) \not = \varnothing \text{ then } \mathbf{Cl}_{\sigma_X}(V) \subseteq \bigcup_{i\in n}V_{x_i}].
    \end{equation}
    Let
    $$
        \mathcal{F} \coloneq \{f[\mathbf{Cl}_{\sigma_X}(V)] : V\in \mathcal{B}_m \text{ and } \mathbf{Cl}_{\sigma_X}(V) \cap f^{-1}(y) = \varnothing\}.
    $$
    Let $G\coloneq Y \setminus \bigcup\mathcal{F}$. Since $\mathcal{F} \subseteq f[\mathbf{Cl}_{\sigma_X}(\mathcal{B}_m)]$, from (\ref{f_Bn_loc_fin}) it follows that $G \in \sigma_Y$. We prove that 
    $$
        f^{-1}[G] \subseteq \bigcup_{i\in n}V_{x_i}.
    $$
    Suppose that there exists $z \in G$ such that $f^{-1}(z) \not \subseteq \bigcup_{i\in n}V_{x_i}$. Take $x \in f^{-1}(z) \setminus \bigcup_{i\in n}V_{x_i}$, from (\ref{1_m}) it follows that $\rho(f^{-1}(y), x) > \frac{1}{m}$. Now take $V \in \mathcal{B}_m$ such that $x \in V$, from (\ref{B_m}) it follows that $\mathbf{Cl}_{\sigma_X}(V) \cap f^{-1}(y) = \varnothing$, therefore $f[\mathbf{Cl}_{\sigma_X}(V)] \in \mathcal{F}$. Then $z = f(x) \in \bigcup \mathcal{F}$, a contradiction.

    Now we see that $G \cap \bigcap_{i \in n} W_{x_i} \in \sigma_Y(y)$ and 
    $$
        f^{-1}[G \cap \bigcap_{i \in n} W_{x_i}] \subseteq W\cap(V_{x_0}\cup\dots\cup V_{x_{n-1}}) \subseteq U_{x_0} \cup \dots \cup U_{x_{n-1}} \subseteq U.
    $$
\end{proof}

\section{Counterexample}

\begin{lemm} \label{GdeltaR_GdeltaM}
    For all $B\subseteq \mathbb{R}$ and for all $A\subseteq \mathbb{Q}$ $A$ is $G_\delta$ in $\langle B\cup Q, \tau_{\mathbb{R}}\rangle$ if and only if $A$ is $G_\delta$ in $\langle B\cup Q, \tau_{\mathbb{M}}\rangle$.\qed
\end{lemm}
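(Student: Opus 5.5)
One direction is immediate. Every Euclidean open set is open in the Michael line, so $\tau_{\mathbb{R}}\upharpoonright(B\cup\mathbb{Q})\subseteq\tau_{\mathbb{M}}\upharpoonright(B\cup\mathbb{Q})$. Hence any $A$ that is $G_\delta$ in $\langle B\cup\mathbb{Q},\tau_{\mathbb{R}}\rangle$ is $G_\delta$ in $\langle B\cup\mathbb{Q},\tau_{\mathbb{M}}\rangle$. (I read $Q$ in the statement as $\mathbb{Q}$.)

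For the converse, I plan to replace each Michael-open set by a Euclidean open set that agrees with it on the rationals. Suppose $A=\bigcap_{n\in\omega}O_n\cap(B\cup\mathbb{Q})$ with each $O_n\in\tau_{\mathbb{M}}$. By the description of the base of $\mathbb{M}$, each $O_n$ has the form $U_n\cup K_n$, where $U_n$ is the Euclidean interior of $O_n$ and $K_n\subseteq\mathbb{P}$. This holds because every point of $O_n\cap\mathbb{Q}$ has a Euclidean neighbourhood inside $O_n$, so $O_n\cap\mathbb{Q}\subseteq U_n$.

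Since $A\subseteq\mathbb{Q}$, I expect $\bigcap_n U_n\cap\mathbb{Q}=\bigcap_n O_n\cap\mathbb{Q}=A$. The irrational points in $\bigcap_n U_n\cap B$ still have to be removed. For this I will use the countability of $\mathbb{Q}$: the set $\mathbb{R}\setminus\{q\}$ is Euclidean open for each $q$, and hence $\mathbb{Q}=\bigcap_{q\in\mathbb{Q}}\ldots$ is not directly usable. It is the irrationals that must be excluded, and $\mathbb{P}$ is a $G_\delta$, not an $F_\sigma$, so excluding them by intersecting open sets fails. Instead I will check the claim directly.

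I want $A=\bigcap_n W_n\cap(B\cup\mathbb{Q})$ for some Euclidean open $W_n$. An irrational $b\in B$ lying in every $U_n$ is a problem. However, such a $b$ also lies in every $O_n$ and in $B$, so $b\in A$. This contradicts $A\subseteq\mathbb{Q}$. Therefore $\bigcap_n U_n\cap(B\cup\mathbb{Q})=A$ holds automatically, since $U_n\subseteq O_n$ gives one inclusion and the rational part was computed above. So $A$ is $G_\delta$ in $\langle B\cup\mathbb{Q},\tau_{\mathbb{R}}\rangle$.

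The main step is the observation that $O_n\cap\mathbb{Q}\subseteq\mathrm{int}_{\mathbb{R}}O_n$, which follows from the structure of Michael-line neighbourhoods at rational points. After that, everything reduces to the fact that $A$ contains no irrationals, and no real obstacle remains.
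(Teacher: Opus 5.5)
Your proof is correct. It is the routine verification that the paper omits: the lemma is stated with no proof, and you replace each Michael-open $O_n$ by its Euclidean interior, which still contains every rational point of $O_n$, so the trace on $B\cup\mathbb{Q}$ of the countable intersection is unchanged because $A\subseteq\mathbb{Q}$. The paragraph about $\mathbb{R}\setminus\{q\}$ and $\mathbb{P}$ being $G_\delta$ is an abandoned false start and should be deleted, and the ``contradiction'' about an irrational $b$ is just the inclusion $\bigcap_n U_n\subseteq\bigcap_n O_n$.
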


\begin{lemm} \label{QisnotGdelta}
    Let $B\subset\mathbb{R}$ be a Bernstein set. Then for any $a<b \in \mathbb{R}$, the set $(a,b)\cap\mathbb{Q}$ is not $G_\delta$ in $\langle \mathbb{Q} \cup B, \tau_{\mathbb{R}}\rangle$.
\end{lemm}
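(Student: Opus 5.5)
Suppose, towards a contradiction, that $(a,b)\cap\mathbb{Q}=\bigcap_{n\in\omega}G_n$, where each $G_n$ is open in $\langle \mathbb{Q}\cup B, \tau_{\mathbb{R}}\rangle$. For each $n$ I would choose an open set $O_n\subseteq\mathbb{R}$ with $G_n=O_n\cap(\mathbb{Q}\cup B)$, and I would shrink it by replacing $O_n$ with $O_n\cap(a,b)$. This is harmless, since $(a,b)\cap\mathbb{Q}\subseteq(a,b)$, so the intersection is unchanged. Then $H\coloneq\bigcap_{n}O_n$ is a $G_\delta$ subset of $\mathbb{R}$ with $(a,b)\cap\mathbb{Q}\subseteq H\subseteq(a,b)$ and
$$
H\cap(\mathbb{Q}\cup B)=(a,b)\cap\mathbb{Q}.
$$
In particular $H\cap B\subseteq\mathbb{Q}$.

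The key step is to show that $H\setminus\mathbb{Q}$ contains a perfect set. The set $H$ is $G_\delta$ in $\mathbb{R}$ and contains the countable dense subset $(a,b)\cap\mathbb{Q}$ of $(a,b)$. Hence $H$ is a dense $G_\delta$ in $(a,b)$, and so $H\setminus\mathbb{Q}=H\cap\mathbb{P}$ is also a dense $G_\delta$ in $(a,b)$. By the Baire category theorem it is uncountable: otherwise $(a,b)$ would be a countable union of the nowhere dense sets $(a,b)\setminus O_n$ and singletons. By the perfect set property for Polish spaces (Alexandrov--Hausdorff: every uncountable $G_\delta$, indeed every uncountable Borel, subset of $\mathbb{R}$ contains a Cantor set), there is a perfect set $P\subseteq H\setminus\mathbb{Q}$. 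Alternatively, one can build a Cantor scheme directly inside $\bigcap_n O_n$ using density of $O_n\cap(a,b)$ and removing one rational at each stage.

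Since $B$ is a Bernstein set, it meets every uncountable closed subset of $\mathbb{R}$, so $B\cap P\neq\varnothing$. A point of $B\cap P$ lies in $H\cap B$ but not in $\mathbb{Q}$, which contradicts $H\cap B\subseteq\mathbb{Q}$.

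The only nontrivial ingredient is the perfect set step. It is a standard fact, so I expect no real obstacle beyond citing it, or alternatively writing out the Cantor scheme, which is routine.
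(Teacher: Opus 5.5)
Your proof is correct and follows the same route as the paper. Both pull the representation back to a $G_\delta$ set of $\mathbb{R}$ inside $(a,b)$ containing $(a,b)\cap\mathbb{Q}$, observe it is a dense $G_\delta$ in $(a,b)$, extract an uncountable closed (Cantor) set from it, and contradict the Bernstein property. Your choice of $P\subseteq H\setminus\mathbb{Q}$ is slightly more careful than the paper, whose claim $K\subseteq(a,b)\setminus B$ tacitly assumes $B\cap\mathbb{Q}=\varnothing$. That assumption holds in the application, where $B\subseteq\mathbb{P}$, but it is not a hypothesis of the lemma.
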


\begin{proof}
Suppose, for contradiction, that there exist $a<b$ such that $(a,b)\cap\mathbb{Q}$ is $G_\delta$ in $\langle \mathbb{Q} \cup B, \tau_{\mathbb{R}}\rangle$. Then
\[
(a,b)\cap\mathbb{Q} = \bigcap_{n\in\omega} U_n,
\]
where each $U_n$ is open in $\langle \mathbb{Q} \cup B, \tau_{\mathbb{R}}\rangle$. Write $U_n = (\mathbb{Q} \cup B) \cap V_n$ with $V_n\in\tau_{\mathbb{R}}$.
Let
\[
K \coloneq \bigcap_{n\in\omega} V_n \;\cap\; (a,b).
\]
Then $K$ is $G_\delta$ in $\mathbb{R}$. Moreover: $(a,b)\cap\mathbb{Q}\subseteq K \subseteq(a,b)\setminus B$. So $K$ is a dense $G_\delta$ set in $(a,b)$. Consequently, there exists an uncountable and closed set $P\subset K$.

But $P\subset K \subset \mathbb{R}\setminus B$, i.e. $P\cap B = \varnothing$. This contradicts the definition of a Bernstein set: every uncountable closed subset of $\mathbb{R}$ must intersect both $B$ and its complement. Hence our assumption was false.
\end{proof}

\begin{teo}
    There exist hereditary paracompact first countable spaces $X$ and $Y$ and a perfect map $f: X \to Y$ such that the triple $\langle f, X, Y\rangle$ is not (perfect)-submetrizable.
\end{teo}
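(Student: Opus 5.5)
The plan is to build $X$ by \emph{doubling} a Michael-line-type space along a non-$G_\delta$ set of rationals. Then $f$ is the folding map, and the proof shows that perfect submetrizability would force the set where $f$ is injective to be $G_\delta$, which Lemma \ref{QisnotGdelta} rules out.

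\emph{Construction.} Fix a Bernstein set $B\subset\mathbb{R}$ with $B\cap\mathbb{Q}=\varnothing$ (shrink $B$ by a countable set if needed). Fix irrationals $a<b$, and let $A\coloneq(a,b)\cap\mathbb{Q}$. Put $Y\coloneq\langle \mathbb{Q}\cup B,\tau_{\mathbb{M}}\rangle$, a subspace of the Michael line. Let $X$ be the space obtained from $Y\times\{0,1\}$ by identifying $(q,0)$ with $(q,1)$ for every $q\in A$. Let $f\colon X\to Y$ be the map induced by $(y,i)\mapsto y$.

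\emph{Verifying the hypotheses.} In $Y$ the points of $B$ are isolated, so $\mathbb{Q}$ is closed in $Y$ and carries its usual topology. Since $a,b\notin\mathbb{Q}$, the set $A$ is clopen in $\mathbb{Q}$, hence closed in $Y$. Thus $X$ is the adjunction of two copies of $Y$ along a closed set, and the remaining checks are as follows.
\begin{itemize}
\item The map $f$ is continuous with fibres of size at most two.
\item The map $f$ is closed: the image of a closed set is the union of two closed sets of $Y$.
\item Both spaces are first countable; neighbourhoods of a glued point are unions of neighbourhoods in the two copies.
\item Both spaces are hereditarily paracompact. The Michael line is hereditarily paracompact, and $X$ is a union of two closed copies of $Y$ meeting in the closed set $A$, so every subspace of $X$ is a union of two closed subspaces, each homeomorphic to a subspace of $Y$, and is therefore paracompact.
\item Both spaces are submetrizable. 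For $Y$ use the Euclidean metric. For $X$ use the analogous gluing of two copies of $\langle\mathbb{Q}\cup B,\tau_{\mathbb{R}}\rangle$ along $A$. This gluing is not closed in the Euclidean topology, so instead I would embed $X$ injectively into $\mathbb{R}^2$ by $(y,i)\mapsto(y,\,i\cdot\mathbf{dist}(y,A'))$. Here $A'$ is a closed set, chosen so that the map is continuous from $X$ and injective. Checking this is a routine but fiddly point.
\end{itemize}

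\emph{Key step.} Suppose $\sigma_X\in\tau_X\downarrow_{metr}$ and $\sigma_Y\in\tau_Y\downarrow_{metr}$ make $f$ perfect, with compatible metrics $\rho$ and $d$. For $n\in\omega$ let $E_n\coloneq\{y\in Y:\mathbf{diam}_\rho f^{-1}(y)\geq\frac1n\}$.

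I claim $E_n$ is $\sigma_Y$-closed. Suppose $y_k\to y$ in $\sigma_Y$ with $y_k\in E_n$, and pick $u_k,v_k\in f^{-1}(y_k)$ with $\rho(u_k,v_k)\ge\frac1n$. The set $K\coloneq\{y\}\cup\{y_k:k\in\omega\}$ is compact, and $f$ is perfect, so $f^{-1}[K]$ is compact. Hence subsequences give $u_k\to u$ and $v_k\to v$. Continuity of $f$ gives $u,v\in f^{-1}(y)$, and $\rho(u,v)\ge\frac1n$, so $y\in E_n$.

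Consequently $\{y:|f^{-1}(y)|=1\}=Y\setminus\bigcup_n E_n$ is $G_\delta$ in $\sigma_Y$, and therefore $G_\delta$ in $\tau_Y$. By construction this set is $B\cup A$. Intersecting with the closed set $\mathbb{Q}$ shows that $A$ is $G_\delta$ in $\langle\mathbb{Q}\cup B,\tau_{\mathbb{M}}\rangle$. Lemma \ref{GdeltaR_GdeltaM} then makes $A$ $G_\delta$ in $\langle\mathbb{Q}\cup B,\tau_{\mathbb{R}}\rangle$, contradicting Lemma \ref{QisnotGdelta}.

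\emph{Expected obstacle.} The contradiction above is clean. The main obstacle is verifying that $X$ itself is submetrizable, since the naive quotient metric topology is not Hausdorff-coarser in the right way. If the embedding sketched above fails, I would use Proposition \ref{Oka_lemma} on the $\sigma$-discrete cozero family consisting of the two open halves $X\setminus(Y\times\{1\})$ and $X\setminus(Y\times\{0\})$ together with a countable base of $Y$. This should give a one-to-one continuous map of $X$ into a metric space.
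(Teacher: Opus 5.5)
\textbf{The gap: your $X$ is not submetrizable.} The point you flagged as the ``expected obstacle'' is fatal, not a technicality. A counterexample to Chen's question needs both spaces submetrizable; the definition of (perfect)-submetrizable presupposes this, and without it the statement is trivial. Let $e_i\colon Y\to X$ send $y$ to the class of $(y,i)$; both maps are continuous. Suppose $\rho$ were a metric on $X$ with $\tau_\rho\subseteq\tau_X$. Then $h(y)=\rho(e_0(y),e_1(y))$ is a continuous real function on $\langle\mathbb{Q}\cup B,\tau_{\mathbb{M}}\rangle$ with $h^{-1}(0)=A$. So $A$ is a zero set, hence $G_\delta$, contradicting Lemmas \ref{GdeltaR_GdeltaM} and \ref{QisnotGdelta}.

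Both of your repairs fail for this reason. The embedding needs a closed $A'\subseteq\mathbb{R}$ with $A'\cap(\mathbb{Q}\cup B)=A$. That is impossible, because $\mathbf{Cl}_{\tau_{\mathbb{R}}}(A)=[a,b]$ meets $B$. Proposition \ref{Oka_lemma} assumes $X$ is already submetrizable. Moreover, the halves $(Y\setminus A)\times\{i\}$ being cozero in $X$ would again make $A$ a zero set of $Y$.

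There is also a smaller slip. The set of one-point fibres is $A$, not $B\cup A$. Had it been $B\cup A$, which is open in $Y$, intersecting with the closed set $\mathbb{Q}$ would not yield a $G_\delta$ of the non-perfect space $Y$.

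\textbf{Why the strategy cannot be repaired by a different gluing.} Your key step does not really use $\sigma_Y$. Let $f\colon\langle X,\tau_X\rangle\to\langle Y,\tau_Y\rangle$ be any closed surjection and $\rho$ any metric with $\tau_\rho\subseteq\tau_X$. Then $E_n$ is already $\tau_Y$-closed. Given $y\notin E_n$, choose $\varepsilon>0$ with $\mathbf{diam}_\rho f^{-1}(y)+2\varepsilon<\frac1n$. Let $O$ be the set of points at $\rho$-distance $<\varepsilon$ from $f^{-1}(y)$. Then $Y\setminus f[X\setminus O]$ is a $\tau_Y$-neighbourhood of $y$ missing $E_n$. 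So for a perfect map out of a submetrizable space, the set of one-point fibres is always $G_\delta$ in $\tau_Y$. Your contradiction can therefore only ever refute submetrizability of $X$, never (perfect)-submetrizability of the triple.

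\textbf{What the paper does instead.} The missing idea is to exploit something $\sigma_Y$ has and $\tau_{\mathbb{M}}$ lacks, namely that closed sets are $G_\delta$, inside an $X$ that is visibly submetrizable. The paper splits $\mathbb{P}=B_0\cup B_1$ into disjoint Bernstein sets and takes $X=(\mathsf{D}(2)\times\mathbb{M})\setminus(\{0\}\times B_1\cup\{1\}\times B_0)$. This $X$ is a subspace of $\mathsf{D}(2)\times\mathbb{M}$, so it is submetrizable. The two points over each rational are kept apart, and each irrational has one preimage.

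Choose $U_0\in\sigma_X(\langle0,q\rangle)$ with $\langle1,q\rangle\notin\mathbf{Cl}_{\sigma_X}(U_0)$. This forces, for some $a<b$, both $(\{0\}\times(a,b))\cap X\subseteq U_0$ and $\mathbf{Cl}_{\sigma_X}(U_0)\cap(\{1\}\times(a,b))=\varnothing$. Then $f[\mathbf{Cl}_{\sigma_X}(U_0)]$ is $\sigma_Y$-closed, hence $G_\delta$. Its preimage meets $(\{1\}\times(a,b))\cap X$ exactly in $\{1\}\times((a,b)\cap\mathbb{Q})$. This makes $(a,b)\cap\mathbb{Q}$ a $G_\delta$ in $\langle\mathbb{Q}\cup B_1,\tau_{\mathbb{M}}\rangle$, which the two lemmas forbid.
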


\begin{proof}
    Let $\mathbb{P}=B_0\cup B_1$, where $B_0$ and $B_1$ are disjoint Bernstein subsets of $\mathbb{R}$.
    Let ($\mathsf{D}(2)$ is the set $2$ with the discrete topology)
    $$X \coloneq (\mathsf{D}(2) \times \mathbb{M}) \setminus (\{0\}\times B_1 \cup \{1\} \times B_0)$$
    and
    $$
        \pi \coloneq \text{ the projection of $\mathsf{D}(2) \times \mathbb{M}$ onto $\mathbb{M}$}.
    $$
    Since $\mathsf{D}(2)$ is compact and $\{0\} \times B_1 \cup \{1\} \times B_0$ is open in $\mathsf{D}(2) \times \mathbb{M}$, from \cite[Theorem 3.7.1]{eng} and \cite[Proposition 3.7.6]{eng} it follows that $\pi\upharpoonright X \colon X\to \mathbb{M}$ is perfect and onto. Let $f\coloneq \pi\upharpoonright X$. Now we prove that the triple $\langle f, X, \mathbb{M}\rangle$ is not (perfect)-submetrizable.

    Suppose that there exist $\sigma_X \in \tau_X\downarrow_{metr}$ and $\sigma_Y \in \tau_{\mathbb{M}}\downarrow_{metr}$ such that $f: \langle X, \sigma_X\rangle \to \langle \mathbb{R}, \sigma_Y \rangle$ is continuous and perfect. Take $q \in \mathbb{Q}$ and consider points $q_0 = \langle 0, q\rangle$ and $q_1 = \langle 1, q\rangle$ of $X$. Take $U_0 \in \sigma_X(q_0)$ such that $q_1 \not \in \mathsf{Cl}_{\sigma_X}(U_0)$. Then there exist $a < b \in \mathbb{R}$ such that 
    $$
        q_0 \in \big(\{0\}\times(a,b)\big) \cap X \subseteq U_0
    $$
    and
    $$
        q_1 \in \big( \{1\}\times(a,b) \big) \cap X \subseteq X \setminus \mathsf{Cl}_{\sigma_X}(U_0).
    $$
    Since $f: \langle X, \sigma_X\rangle \to \langle \mathbb{R}, \sigma_Y \rangle$ is perfect, then $f[\mathsf{Cl}_{\sigma_X}(U_0)]$ is closed in $\langle \mathbb{R}, \sigma_Y \rangle$, and so it is $G_\delta$ in $\langle \mathbb{R}, \sigma_Y \rangle$. It follows that $f^{-1}[f[\mathsf{Cl}_{\sigma_X}(U_0)]]$ is $G_\delta$ in $\langle X, \sigma_X\rangle$, and so in $X$. From the definition of $f$ and $X$ it follows that
    $$
        f^{-1}[f[\mathsf{Cl}_{\sigma_X}(U_0)]] \cap (\big(\{1\}\times(a,b)\big)\cap X) = \{1\} \times ((a,b)\cap\mathbb{Q}).
    $$
    It follows that $\{1\} \times ((a,b)\cap\mathbb{Q})$ is $G_\delta$ in $X$, and so $(a,b)\cap\mathbb{Q}$ is $G_\delta$ in $\langle \mathbb{Q}\cup B_1, \tau_\mathbb{M} \rangle$. Now from Lemma \ref{GdeltaR_GdeltaM} and Lemma \ref{QisnotGdelta} we obtain a contradiction.
\end{proof}

{\bf Acknowledgement} The work was performed as part of research conducted in the Ural Mathematical Center with the financial support
of the Ministry of Science and Higher Education of the Russian
Federation (Agreement number № 075-02-2026-737).


\bigskip
\begin{thebibliography}{1}

\bibitem{eng} Engelking, R. (1989). General topology. Sigma series in pure mathematics, 6.

\bibitem{enc} Hart, K., Nagata, J., Vaughan, J. {\it Encyclopedia of General Topology.} (Elsevier, 2003)

\bibitem{Lin2} Lin, S., Yun, Z., {\it Generalized Metric Spaces and Mappings}, Science Press, Atlantis Press, 2017.

\bibitem{oka} Oka, S. (1983). Dimension of Stratifiable Spaces. Transactions of the American Mathematical Society, 275(1), 231–243. https://doi.org/10.2307/1999015

\bibitem{Chen} Chen, H. (2011). Perfect images of submetric spaces. In Topology Proceedings (Vol. 37, pp. 145-153).
\end{thebibliography}
\end{document}